\def\fullpage {
\addtolength{\topmargin}{-2 cm}
\addtolength{\oddsidemargin}{-0.9cm} \addtolength{\textwidth}{+2 cm}
\addtolength{\textheight}{+4 cm}}
\newtheorem{thm}{Theorem}%[section]
\newtheorem{lemma}[thm]{Lemma}
\newtheorem{conj}[thm]{Conjecture}
\newtheorem{problem}[thm]{Question}
\theoremstyle{remark}
\theoremstyle{definition}
\title{Sparse hypergraphs with low independence number}
\begin{document}

\author{
\quad{Jeff Cooper}
\thanks{Department of Mathematics, Statistics, and Computer
Science, University of Illinois at Chicago, Chicago, IL 60607, USA;  email:
jcoope8@uic.edu}
\quad{Dhruv Mubayi}
\thanks{Department of Mathematics, Statistics, and Computer Science, University of Illinois at Chicago, Chicago IL 60607, USA; research supported in part by NSF grant DMS-1300138; email:  mubayi@uic.edu}
}

\maketitle

%%%%%%%%%%%%%%%%%%%%%%%%%%%%%%%
\begin{abstract}
Let $K^{(3)}_4$ denote the complete $3$-uniform hypergraph on $4$ vertices.
Ajtai, Erd\H{o}s, Koml\'os, and Szemer\'edi (1981)
asked if there is a function $\omega(d)\to \infty$ such that every $3$-uniform, $K^{(3)}_4$-free
hypergraph $H$ with $N$ vertices and average degree $d$ has independence number at least 
$\frac{N}{d^{1/2}}\omega(d)$.
We answer this question by constructing a $3$-uniform, $K^{(3)}_4$-free
hypergraph with independence number at most $2\frac{N}{d^{1/2}}$.
We also provide counterexamples to several related conjectures
and improve the lower bound of some hypergraph Ramsey numbers.
\end{abstract}

\section{Introduction}
A $k$-uniform hypergraph $H$ is a pair $H = (V,E)$, where $V$ is the vertex set
and $E \subset \binom{V}{k}$ is the edge set. We refer to the edge
set of the hypergraph by $H$ and the vertex set by $V(H)$.
The degree of a vertex in $V(H)$ is the number of edges containing
that vertex.
An independent set in a hypergraph is a subset of $V(H)$ which
contains no edge of $H$. The independence number of $H$, denoted $\alpha(H)$,
is the maximum size of an independent set in $H$.
Tur\'an~\cite{turan} showed that $\alpha(G) \geq \frac{N}{d+1}$ for any graph $G$
with $N$ vertices and average degree $d$.
Spencer~\cite{turanspencer} extended Tur\'an's result to hypergraphs
by showing that for all $k \geq 1$ there is a $c_k$
so that every $(k+1)$-uniform hypergraph $H$ with average degree $d$
satisfies $\alpha(H) \geq c_k \frac{N}{d^{1/k}}$.

When $G$ is a graph, Tur\'an's bound can be improved if $G$ is
forbidden from containing a fixed subgraph.
Ajtai, Koml\'os, and Szemer\'edi~\cite{trifreeaks} showed that if $G$ is triangle-free,
then 
\begin{equation}\label{trifree}
\alpha(G) \geq \frac{1}{100}\frac{N}{d}\log d.
\end{equation}
Ajtai, Erd\H{o}s, Koml\'os, and Szemer\'edi~\cite{ktfreeaeks} subsequently 
showed that if $t \geq 4$ and $G$ is $K_t$-free, then $\alpha(G) \geq c_t \frac{N}{d}\log\log d$. 

Let $H$ be a $(k+1)$-uniform hypergraph with $N$ vertices and average degree $d$.
Ajtai, Koml\'os, Pintz, Spencer, and Szemer\'edi~\cite{uncrowdedakpss}
showed that there exists a positive constant $c_k$ such that 
if $H$ contains no $2$, $3$, or $4$ cycles, then
\begin{equation}\label{linear}
\alpha(H) \geq c_k \frac{N}{d^{1/k}}\log^{1/k}d.
\end{equation}
Applications of \eqref{linear} have been found in
number theory~\cite{trifreeaks}, discrete geometry~\cite{heilbronn}, coding theory~\cite{lefmann}, 
and Ramsey theory~\cite{rainbow}.
Ajtai, Erd\H{o}s, Koml\'os and Szemer\'edi asked if, like in the graph case,
\eqref{linear} could also be extended to other families of hypergraphs.

\begin{problem}[Ajtai-Erd\H{o}s-Koml\'os-Szemer\'edi~\cite{ktfreeaeks}]\label{aeksconj}
Is there a function $\omega(d)\to \infty$ such that if a $3$-uniform
hypergraph $H$ contains no $K^{(3)}_4$ (or even $K^{-(3)}_4$), then $\alpha(H) \geq \frac{N}{d^{1/2}}\omega(d)$?
\end{problem}
\noindent
We construct hypergraphs which negatively answer this question, even in the $K_4^{-(3)}$ case.
The construction is presented in Section \ref{secconstruction0}. In Section \ref{secconstruction},
we generalize this construction to $k$-uniform hypergraphs and 
disprove several conjectures related to Question \ref{aeksconj}.
We also discuss an application to hypergraph Ramsey numbers.

\section{$3$-uniform construction}\label{secconstruction0}
In this section, we answer Question \ref{aeksconj} by constructing a 
$K_4^{-(3)}$-free, $3$-uniform hypergraph $H$ with independence number at most $2N/d^{1/2}$.
The hypergraph $H$ is constructed from the complete bipartite
graph $K_{n,n}$ with vertex classes $[n]$ and $[n]$. The vertices of $H$ correspond to
edges in the graph, while the edges of $H$ correspond to $3$-edge paths
which open in the increasing direction:
\begin{align*}
V(H) &= [n]\times[n] \\
E(H) &= \{ \{ab, ac, db \in [n]\times[n]: c > b, d > a \}.
\end{align*}
$H$ is clearly $3$-uniform, contains $N = n^2$ vertices,
and has average degree $d = 3(n-1)^2/4$.
For $v \in V(H)$, consider the link graph $L_v = \{uw: uvw \in E(H)\}$.
The components of $L_v$ are either stars (when $v$ is in the role of $ac$) or a bipartite
graph (when $v$ is in the role of $ab$). $K_4^{-(3)}$, on the other hand,
contains a vertex whose link graph contains a triangle, so $H$ must be $K_4^{-(3)}$-free.

Let $S \subset V(H)$. If $|S| \geq 2n$, then the edges in $K_{n,n}$ corresponding
to the vertices in $S$ contain a cycle on at least four vertices. 
The smallest vertex on this cycle is contained in a $3$-edge path which opens
in the increasing direction, and this path corresponds to an edge in $H$.
Therefore, $\alpha(H) < 2n < 2 N/d^{1/2}$.

\section{Related problems and conjectures}\label{secconstruction}
\subsection{Ramsey numbers for $3$-uniform tight paths}

Our construction also provides the correct order of magnitude for some
new $3$-uniform Ramsey numbers. Let $F$ be a $3$-uniform hypergraph.
Recall that the Ramsey number $r(F,t)$ is the smallest $n$ so that every red-blue coloring of the edges
of $K_n^{(3)}$ contains a red $F$ or a blue complete $K_t^{(3)}$.
Let $P_s$ denote the $3$-uniform hypergraph with vertex set $[s+2]$ and 
edge set $\{\{i, i+1, i+2\}: i \in [s]\}$. $P_s$ is called the $3$-uniform tight path.
Results of Phelps and R\"odl~\cite{phelpsrodl} imply that the Ramsey number of $P_2$ satisfies
\[
r(P_2, t) = \Theta(t^2/\log t).
\]

It is easy to prove that for fixed $s$, we have ex$(n, P_s) = O(n^2)$ and this immediately implies that
\[
r(P_s, t)  = O(t^2).
\]
Indeed, if we have a $P_s$-free $3$-uniform hypergraph on $c_sn$ vertices ($c_s$ large), 
then its average degree is at most $c'_s n$, so it has an independent set of size at least 
$t=c''_s n^{1/2}$.

We now show that the construction in Section \ref{secconstruction0} contains no $P_4$,
which improves the lower bound of $r(P_s,t)$ for $s \geq 4$.
The order of magnitude of $r(P_3,t)$ remains open.
\begin{thm}
Fix $s \ge 4$. Then $r(P_s, t) = \Theta(t^2)$.
\end{thm}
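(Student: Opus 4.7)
The upper bound $r(P_s,t)=O(t^2)$ was already observed in the paragraph preceding the theorem. For the lower bound, my plan is to show that the hypergraph $H$ from Section~\ref{secconstruction0} is $P_4$-free; since $P_4$ is a subhypergraph of $P_s$ for every $s\geq 4$, this gives $P_s$-freeness as well. Combined with $\alpha(H)<2n$ and $|V(H)|=n^2=N$, the $2$-colouring of $E(K_N^{(3)})$ in which the red edges are exactly the edges of $H$ then has no red $P_s$ and no blue $K_{2n}^{(3)}$, giving $r(P_s,t)=\Omega(t^2)$ upon taking $n\approx t/2$.

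The whole argument reduces to a single structural observation. Each edge of $H$ has the form $e=\{ab,ac,db\}$ with $c>b$ and $d>a$, and the vertex $ab$ is intrinsic to $e$: it is the unique vertex of $e$ that shares a coordinate with each of the other two. Call it the \emph{middle} $\mu(e)$. The key claim is: \emph{if $e\ne f$ are distinct edges of $H$ with $|e\cap f|\geq 2$, then $\mu(e)=\mu(f)$ and this common middle lies in $e\cap f$.} I would prove this by a short case analysis on which $2$-subset of $\{ab,ac,db\}$ equals $e\cap f$. If $e\cap f$ is $\{ab,ac\}$ or $\{ab,db\}$, the two shared vertices share a coordinate, and in any edge of $H$ only the middle together with one of its coordinate-partners share one; this forces both middles to equal $ab$. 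The remaining case $e\cap f=\{ac,db\}$ is the delicate one: here $ac$ and $db$ share no coordinate, so they must play the two non-middle roles in $f$; solving for the middle of $f$ yields either $ab$ (which returns $f=e$) or $dc$, and the candidate edge $\{ac,dc,db\}$ would force $b>c$, contradicting $c>b$.

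Granting the claim, suppose $H$ contained a $P_4$ with edges $e_1,e_2,e_3,e_4$. Consecutive edges share two vertices, so three applications of the claim give $\mu(e_1)=\mu(e_2)=\mu(e_3)=\mu(e_4)=m$, whence $m\in e_1\cap e_4$. But labelling the vertices of the tight path as $u_1,\dots,u_6$, one has $e_1=\{u_1,u_2,u_3\}$ and $e_4=\{u_4,u_5,u_6\}$, which are disjoint: contradiction. The only step that takes any real work is the $\{ac,db\}$ subcase of the structural claim; everything else is bookkeeping, and once the claim is in hand the Ramsey lower bound follows immediately from the independence bound already established in Section~\ref{secconstruction0}.
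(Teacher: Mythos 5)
Your proof is correct, but it takes a different route from the paper's. The paper argues via link graphs: it recalls (from Section~\ref{secconstruction0}) that every link graph $L_v$ of $H$ has exactly one complete-bipartite component, whose pairs have codegree one in $H$, with all other components being stars; it then observes that the link of a degree-three vertex of $P_4$ is a $3$-edge path, which being connected must land entirely in the bipartite component, yet one of that path's edges (e.g.\ $\{u_4,u_5\}$ in the link of $u_3$) lies in two edges of $P_4$, contradicting codegree one. You instead introduce the invariant $\mu(e)$, the unique vertex of an edge sharing a coordinate with each of the other two, and prove that any two distinct overlapping edges have the same middle, lying in their intersection; chaining this through $e_1,e_2,e_3,e_4$ forces $\mu(e_1)\in e_1\cap e_4=\emptyset$. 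I checked your three-way case split on $e\cap f$ and it is complete and correct, including the delicate $\{ac,db\}$ subcase where the alternative middle $dc$ is ruled out by $c>b$ (and also by $d>a$). Both proofs are short; the paper's is more economical because it reuses the already-established link-graph description, while yours is more self-contained and arguably makes the obstruction more transparent since the ``middle'' invariant does all the work. Your derivation of the Ramsey lower bound from $P_4$-freeness plus $\alpha(H)<2n$ matches the paper's (and the surrounding discussion) exactly.
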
 
\begin{proof}
We only need to prove the lower bound, which follows by observing 
that the hypergraph $H$ in Section \ref{secconstruction0} contains no $P_4$. 
Recall that every link graph of $H$ has one
component that is a complete bipartite graph and all of its other components are stars;
further, the pairs of vertices which form edges in the bipartite component appear
in exactly one edge of $H$.
On the other hand, the link graph of each of the degree $3$ vertices in
$P_4$ contains a $3$-edge path and one of the pairs of vertices which form an edge
in this path is contained in two edges of $P_4$.
\end{proof}

\subsection{Generalization to $k$-uniform hypergraphs}
The construction in Section \ref{secconstruction0} starts with a bipartite graph
and builds a hypergraph whose edges correspond to $3$-edge paths in the graph. 
In this section, we generalize this
method by starting with a multipartite hypergraph and building a new hypergraph whose
edges correspond to some fixed hypergraph.
The resulting hypergraphs provide counterexamples to various conjectures concerning
$k$-uniform hypergraphs.

\subsubsection{Chromatic number of $k$-uniform hypergraphs}
A proper coloring of a hypergraph $H$ is a partition of $V(H)$ into independent sets.
The chromatic number of $H$, denoted $\chi(H)$, is the minimum number of parts needed in a 
proper coloring of $H$.
Erd\H{o}s and Lov\'asz~\cite{erdoslovasz} showed that every $(k+1)$-uniform hypergraph
with maximum degree $\Delta$ has $\chi(H) \leq c_k \Delta^{1/k}$.
Strengthening \eqref{linear}, Frieze and the second author~\cite{fmcoloringk} showed that every
$(k+1)$-uniform linear hypergraph with maximum degree $\Delta$ satisfies
$\chi(H) \leq c'_k (\frac{\Delta}{\log\Delta})^{1/k}$.
In \cite{fmcoloring3,fmcoloringk}, 
the same authors conjectured a stronger positive answer to the question of
Ajtai, Erd\H{o}s, Koml\'os, and Szemer\'edi.
\begin{conj}[Frieze-Mubayi~\cite{fmcoloring3, fmcoloringk}]\label{fmconj}
If $F$ is a $(k+1)$-uniform hypergraph 
and $H$ is an $F$-free $(k+1)$-uniform hypergraph with maximum degree $\Delta$, 
then $\chi(H) \leq c_{F}(\Delta/\log \Delta)^{1/k}$. 
\end{conj}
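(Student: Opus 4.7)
The plan is to disprove Conjecture~\ref{fmconj} by generalizing the construction of Section~\ref{secconstruction0} to arbitrary $k \geq 2$, producing an $F$-free $(k+1)$-uniform hypergraph $H$ with $\chi(H) = \Omega(\Delta^{1/k})$ --- this exceeds $c_F(\Delta/\log\Delta)^{1/k}$ once $\Delta$ is large. For $k=2$ the refutation already follows from Section~\ref{secconstruction0}: there $\alpha(H) \leq 2N/d^{1/2}$ forces $\chi(H) \geq N/\alpha(H) = \Omega(d^{1/2}) = \Omega(\Delta^{1/2})$, beating the conjectured bound by a factor of $\log^{1/2}\Delta$. So the real task is $k \geq 3$.

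For the construction I would set $V(H) = [n]^k$ and declare a $(k+1)$-subset to be a hyperedge exactly when it consists of a ``base'' vertex $\mathbf{v} = (v_1,\dots,v_k)$ together with, for each coordinate $i \in [k]$, a ``neighbor'' obtained from $\mathbf{v}$ by replacing $v_i$ by a strictly larger value. This recovers the Section~\ref{secconstruction0} pattern when $k=2$, gives $N = n^k$, and yields average degree $\Theta(n^k)$, so $\Delta^{1/k} = \Theta(n)$. $F$-freeness should follow from a link analysis: each $L_\mathbf{v}$ decomposes into $k+1$ tightly structured pieces according to the role $\mathbf{v}$ plays in the incident edge, and each piece is $k$-partite $k$-uniform (its natural partition classes come from which coordinate got increased). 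This rules out any $F$ whose vertex-links contain a non-$k$-partite configuration --- for instance $F = K_{k+2}^{(k+1)}$, whose links are the non-$k$-partite $K_{k+1}^{(k)}$. For $k=2$ this specializes to the ``triangle in the link graph'' obstruction used in Section~\ref{secconstruction0} to rule out $K_4^{-(3)}$. For the independence number, I would mimic the cycle argument: if $|S| \geq c_k n^{k-1}$, then iterated pigeonhole in the coordinate directions produces a ``multidimensional cycle'' in $S$ whose coordinatewise-minimum vertex, together with appropriately chosen neighbors along the cycle, forms a hyperedge of $H$. This yields $\alpha(H) = O(n^{k-1})$, hence $\chi(H) \geq N/\alpha(H) = \Omega(n) = \Omega(\Delta^{1/k})$.

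The main obstacle is the link-analysis step. The $k$-partiteness argument rules out large complete or near-complete $F$, but to refute the conjecture with an \emph{interesting} fixed $F$ one must check that $L_\mathbf{v}$ really avoids the link-of-$F$ uniformly across all $k+1$ role-types of $\mathbf{v}$; in the $k=2$ case there are only two roles and bipartiteness makes this transparent, while for $k \geq 3$ the neighbor-links have a more restrictive internal structure that has to be unpacked case by case. Generalizing the cycle/pigeonhole argument for $\alpha(H)$ is the other delicate point: one has to produce exactly the prescribed $(k+1)$-star pattern with the correct strict inequalities in each coordinate, not merely some dense substructure in $S$.
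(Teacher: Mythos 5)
Your construction is exactly the paper's $H_k$: an edge is a ``base'' tuple $\mathbf{v}\in[n]^k$ together with $k$ neighbors, each increasing $\mathbf{v}$ in a single coordinate. (In the paper's language this is the family of positive strong $k$-simplices $S_k^+$ inside $X_1\times\cdots\times X_k$.) The top-level strategy --- bound $\alpha(H_k)$ by $O(n^{k-1})$, then use $\chi\ge N/\alpha$ --- is also the same. But the two steps you flag as obstacles diverge from what the paper does, and one of them is a genuine gap.

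On $F$-freeness, you are actually in better shape than you think, but you prove a weaker statement than the paper. Your link-partiteness observation does go through: the pieces of $L_\mathbf{v}$ coming from the $k+1$ roles live on pairwise disjoint vertex sets (a vertex in role $0$, class $i$ agrees with $\mathbf{v}$ except in coordinate $i$ where it is larger; a vertex in role $j$, class $0$ is smaller in coordinate $j$ only; a vertex in role $j$, class $i$ with $i\ne j$ differs in exactly coordinates $i$ and $j$), so the full link is $k$-partite and $H_k$ is $K_{k+2}^{(k+1)}$-free. That does refute Conjecture~\ref{fmconj}. The paper instead shows $H_k$ contains no $T_{k+1}$ via a short non-link argument: if $k+1$ positive strong $k$-simplices pairwise intersect in a common $k$-set $S$, positivity forces them to share a central edge, and then the edges outside $S$ all contain a single vertex, so they cannot themselves form a positive strong $k$-simplex. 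This buys more: $T_{k+1}$ is the ``independent neighborhoods'' configuration, so $T_{k+1}$-freeness disproves the weaker Bohman--Frieze--Mubayi conjecture as well, which $K_{k+2}^{(k+1)}$-freeness does not.

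The independence bound is where your sketch has a real gap. ``Iterated pigeonhole in the coordinate directions'' finds a dense slice $\{v_1\}\times S'$ with $S'$ large, and induction gives a positive strong $(k-1)$-simplex inside $S'$; appending $v_1$ to all its edges gives $k$ of the $k+1$ edges of an $S_k^+$. But the last edge requires a \emph{second} vertex $y>v_1$ in $X_1$ such that $\{y\}\cup e$ also lies in $S$, where $e$ is the central edge of the $S_{k-1}^+$ you found --- and bare pigeonhole on coordinates gives you no control over that. The paper handles this by proving the Zarankiewicz bound $z(n,S_k^+)\le 2kn^{k-1}$ by induction, working not with all of $L_x$ but with the subfamily $B_x^+$ of $(k-1)$-sets that appear in the link of some $x'>x$ as well: the counting identity $\sum_v|B_v^+|=\sum_v|B_v|-|\{T:d_H(T)\ge 2\}|$ shows $|H|>2kn^{k-1}$ forces some $x$ with $|B_x^+|>2(k-1)n^{k-2}$, and then the inductive $S_{k-1}^+$ inside $B_x^+$ has a central edge $e$ of multiplicity $\ge 2$, which is exactly the extra vertex $y>x$ you need. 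Your ``coordinatewise-minimum vertex of a multidimensional cycle'' heuristic does not supply this multiplicity information, so as written the independence bound does not close.
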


Let $T_k$ be the $k$-uniform hypergraph with $k+1$ edges $e_1, \ldots, e_k,f$ 
where for all $i \neq j$ we have $e_i \cap e_j=S$ and $f \supset e_i-S$ 
for some $S$ with $|S|=k-1$.
In other words, $k$ edges share the same set of $k-1$ points and the last edge contains the remaining 
vertex from each of the $k$ edges.  A $k$-uniform hypergraph has independent neighborhoods if it contains no copy of $T_k$. 
Bohman, Frieze, and the second author~\cite{bfmhfree} conjectured a weaker version of 
Conjecture \ref{fmconj}:
if $H$ is a $3$-uniform hypergraph with maximum degree $\Delta$ and 
independent neighborhoods, then $\chi(H) = o(\Delta^{1/2})$. 

The construction in Section \ref{secconstruction0} shows that both of these
conjectures are false for $3$-uniform hypergraphs. We now generalize that
construction to disprove these conjectures for $k$-uniform hypergraphs.

\subsubsection{Construction from positive strong $k$-simplices}\label{secconstructionsimplex}
Fix $k \ge 2$.  
A \emph{$k$-simplex} is a collection of $k+1$ sets with empty intersection,
every $k$ of which have nonempty intersection. 
A \emph{strong $k$-simplex} $S_k$, introduced in \cite{clusterm}, 
is the $k$-uniform hypergraph with vertex set 
$\{v_1, v_1', .\ldots, v_k, v_k'\}$ and edge set  $\{e, e_1, \ldots, e_k\}$ where  
$e=\{v_1, \ldots, v_k\}$ and $e_i=e \cup\{v_i'\}-v_i$ ($e$ is called the central edge). 
Given disjoint sets $X_1, \ldots, X_k$ with each $X_i\cong [n]$, 
a \emph{positive strong $k$-simplex} $S_k^+$ is a $k$-partite strong simplex 
satisfying $v_i, v_i' \in X_i$ and $v'_i > v_i$ for each $i=1,\dots,k$.

Let $X_1, \ldots, X_k$ be disjoint sets each isomorphic to $[n]$. 
Define the $(k+1)$-uniform hypergraph $H_k$ with vertex set $X_1\times \cdots \times X_k$
and edge set
\[
H_k = \{A \subset X_1\times \cdots \times X_k: A \cong S_k^+ \}.
\]
For example, $H_2$ corresponds to the construction in Section \ref{secconstruction0}.

Fix a $k$-uniform hypergraph $F_k$.
The Zarankiewicz number $z(n, F_k)$ is the maximum number of edges in a $k$-partite $k$-uniform 
hypergraph with parts of size $n$ that 
contains no copy of $F_k$. Since copies of $S_k^+$ correspond to edges of $H_k$,
\[
\alpha(H_k) \leq z(n, S_k^+).
\]
We may thus use the following lemma below to bound $\alpha(H_k)$.

\begin{lemma}\label{positivesimplex}
Fix $k \ge 2$. Then $z(n, S^+_k) \le  2kn^{k-1}$.
\end{lemma}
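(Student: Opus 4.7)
The plan is to count edges in any $S_k^+$-free, $k$-partite, $k$-uniform hypergraph $G$ with parts $X_1,\ldots,X_k\cong[n]$, viewing each edge as a tuple $e=(v_1,\ldots,v_k)\in X_1\times\cdots\times X_k$. Call such an edge \emph{$i$-maximal} if there is no $x\in X_i$ with $x>v_i$ such that $(v_1,\ldots,v_{i-1},x,v_{i+1},\ldots,v_k)\in G$. I will partition the edges of $G$ according to an index $i$ in which they are maximal.

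The key observation I would establish first is that \emph{every} edge of $G$ is $i$-maximal for at least one coordinate $i$. Suppose for contradiction that $e=(v_1,\ldots,v_k)$ is not $i$-maximal for any $i$. Then for each $i\in[k]$ I may choose $v_i'\in X_i$ with $v_i'>v_i$ such that the tuple $e_i:=(v_1,\ldots,v_{i-1},v_i',v_{i+1},\ldots,v_k)$ lies in $G$. The $k+1$ edges $\{e,e_1,\ldots,e_k\}$ then have the required intersection pattern (any two $e_i,e_j$ agree on the $k-1$ coordinates outside $\{i,j\}$, each $e_i$ agrees with $e$ on the $k-1$ coordinates outside $i$, and $e$ is the central edge), and the inequalities $v_i'>v_i$ are exactly the positivity condition, so $\{e,e_1,\ldots,e_k\}$ is a copy of $S_k^+$ in $G$, a contradiction.

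Next I would bound, for each $i\in[k]$, the number of $i$-maximal edges. By definition, for every fixed choice of $(v_j)_{j\neq i}\in\prod_{j\neq i}X_j$ there is at most one value of $v_i\in X_i$ (namely the largest $v_i$ that completes to an edge of $G$) producing an $i$-maximal edge. Hence there are at most $n^{k-1}$ $i$-maximal edges, and summing over the $k$ choices of $i$ gives $|G|\le kn^{k-1}$, which implies the stated bound $z(n,S_k^+)\le 2kn^{k-1}$ (in fact slightly better by a factor of $2$).

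There is no serious obstacle; the proof is a short double count once the right notion of maximality is set up. The only point that requires care is checking that the $k+1$ edges produced in the contradiction step genuinely form a positive strong simplex, i.e., that they are distinct (which follows because $v_i'\neq v_i$ and the replacement happens in a different coordinate for each $e_i$) and that the partite and inequality conditions are satisfied, both of which are immediate from the construction.
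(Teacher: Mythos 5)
Your proof is correct and takes a genuinely different route from the paper. The paper proceeds by induction on $k$: it partitions the link of each $v \in X_1$ into $(k-1)$-sets of degree $1$ (the set $A_v$) and degree $\geq 2$ (the set $B_v$), further restricts to those sets $B_v^+$ that also lie in the link of some $v'>v$, shows by a short counting argument that some $x$ has $|B_x^+| > 2(k-1)n^{k-2}$, and then invokes the inductive hypothesis inside $X_2\times\cdots\times X_k$ to find a copy of $S_{k-1}^+$ that can be extended to $S_k^+$ using $x$ and a larger $y\in X_1$. This is where the extra factor of $2$ comes from: the $A_v$ and the $B_v\setminus B_v^+$ terms each cost a separate $n^{k-1}$. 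Your argument replaces the induction by a direct partition of the edges of $G$ according to a coordinate $i$ in which the edge is $i$-maximal, together with the observation that an edge failing to be $i$-maximal for every $i$ gives a copy of $S_k^+$ outright. Because each of the $k$ classes has at most $n^{k-1}$ members, you get $|G| \leq kn^{k-1}$, which is sharper than the $2kn^{k-1}$ claimed by the lemma, and the argument avoids both the induction and the $B_v$ bookkeeping; the only thing the paper's route buys is that it recycles the base-case $K_{n,n}$ cycle argument already present in Section 2, while yours makes the $k=2$ case a special instance of the same uniform argument. Your verification that the $k+1$ edges are pairwise distinct and satisfy the partite and positivity conditions is exactly the point that needed checking, and it is correct.
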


\begin{proof}
We proceed by induction on $k$. 
The base case $k=2$ follows from Section \ref{secconstruction0}.
For the induction step, suppose we are given a $k$-partite 
$H \subset X_1 \times \dots \times X_k$ with  $|H| > 2kn^{k-1}$, where each $X_i \cong [n]$.
For a vertex $v$ in a $k$-uniform hypergraph $H$, define its link to be the $(k-1)$-uniform hypergraph
$L_v=\{S \subset V(H): v \not\in S, S \cup \{v\} \in H\}$.  
For a set of vertices $T$,  let $d_H(T)$ denote the number of edges containing $T$.

For each $v \in X_1$, let $L_v$ be the link $(k-1)$-uniform hypergraph of $v$.  
Let $A_v \subset L_v$ comprise those $(k-1)$-sets $T$ with $d_H(T)=1$ and $B_v=L_v-A_v$.

Let $B_v^+$ be the set of all $S \in B_v$ 
such that there exists $v'>v$ with $S \in L_{v'}$.  
We will find $x \in X_1$ with $|B^+_x| > 2(k-1)n^{k-2}$ and then apply induction. 
Now
\[
\sum_{v \in X_1} |B^+_v|
= \sum_{\substack{S \in X_2\times\cdots\times X_k:\\d_H(S)\ge 2}} (d_H(S) -1)
\ge \sum_{\substack{S \in X_2\times\cdots\times X_k:\\d_H(S)\ge 2}} d_H(S)-n^{k-1}
= \sum_{v \in X_1} |B_v|-n^{k-1}.
\]
Thus
\begin{align*}
2kn^{k-1} 
< |H|
=\sum_{v \in X_1}|L_v|
= \sum_{v \in X_1}|A_v|+\sum_{v \in X_1}|B_v|
&\leq n^{k-1}+\sum_{v \in X_1}|B_v| \\
&\leq 2n^{k-1}+\sum_{v \in X_1}|B^+_v|.
\end{align*}

Consequently, there exists $x \in X_1$ with  $|B^+_x| > 2(k-1)n^{k-2}$. 
Apply induction to $B_x$ to obtain a copy of $S^+_{k-1}$ in $X_2 \times \cdots \times X_{k}$. 
To form $S^+_k$, begin by enlarging each edge of $S^+_{k-1}$ with $x$. 
Add another edge by enlarging the central edge $e$ by some other vertex $y \in X_1$
with $y > x$. Note that $y$ exists since $e \in B_x^+$ and $d_H(e)>1$.  
We have thus obtained a copy of $S_k^+$, where $e \cup \{x\}$ is the central edge.
\end{proof}

Notice $H_k$ has $N = n^k$ vertices and maximum degree $\Delta \leq (k+1)n^k$.
By Lemma \ref{positivesimplex},
\[
\alpha(H_k) 
\leq 2kn^{k-1}
= 2k (k+1)^{1/k} \frac{n^k}{((k+1)n^k)^{1/k}}
\leq 2k (k+1)^{1/k} \frac{N}{\Delta^{1/k}},
\]
and
\[
\chi(H_k) \geq \frac{\Delta^{1/k}}{2k(k+1)^{1/k}}.
\]

Recall that $T_{k+1}$ is the $(k+1)$-uniform hypergraph with $k+2$ edges $e_1, \ldots, e_{k+1},f$ 
where for all $i \neq j$, $e_i \cap e_j=S$ and $f \supset e_i-S$ for some $S$ with $|S|=k$.
Suppose $S_{k,1}^+, \dots, S_{k,k+1}^+$ satisfy $S_{k,i}^+ \cap S_{k,j}^+ = S$, for $i \neq j$ and $|S| = k$.
Since each strong $k$-simplex is positive, they must share a single central edge.
Thus the edges in $(S_{k,1}^+ \cup \dots \cup S_{k,k+1}^+)-S$ share a single vertex
and so do not form a positive strong $k$-simplex.
Therefore $H_k$ does not contain any copy of $T_{k+1}$, 
disproving Conjecture \ref{fmconj} and the weaker conjecture of \cite{bfmhfree}.

\subsubsection{$c$-sparse hypergraphs}
A hypergraph is \emph{$c$-sparse} if every vertex subset $S$ spans at most $c|S|^2$ edges.
By Spencer's extension of Tur\'an's bound, every $c$-sparse hypergraph $H$ with $N$ vertices satisfies
$\alpha(H) \geq c'_k \sqrt{N}$.
Phelps and R\"odl~\cite{phelpsrodl} improved this to
$\alpha(H) \geq c'_k \sqrt{N \log N}$ for linear $3$-uniform hypergraphs.
In 1986, de Caen (see \cite{decaen}) conjectured that a similar improvement
holds even for $c$-sparse hypergraphs (observe that linear implies $\frac{1}{2}$-sparse). 
\begin{conj}[De Caen~\cite{decaen}]\label{decaenconj}
For every 
positive $c$, there is a function $\omega(N) \rightarrow \infty$ such that every $c$-sparse 3-uniform 
hypergraph $H$ with $N$ vertices satisfies $\alpha(H) \geq \omega(N)\sqrt{N}.$
\end{conj}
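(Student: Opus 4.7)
The plan is to refute Conjecture \ref{decaenconj} by verifying that the $3$-uniform hypergraph $H$ constructed in Section \ref{secconstruction0} is itself $c$-sparse for an absolute constant $c$. Combined with the facts $|V(H)| = N = n^2$ and $\alpha(H) \le 2\sqrt{N}$ already established there, this contradicts the conclusion of the conjecture for every prescribed $\omega(N) \to \infty$.

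Fix $S \subset V(H)$ and set $m = |S|$. Since $V(H) = [n]\times[n]$ is identified with the edge set of $K_{n,n}$, the subset $S$ corresponds to a subgraph $G_S$ of $K_{n,n}$ with exactly $m$ edges. By the definition of $E(H)$ in Section \ref{secconstruction0}, every edge of $H$ contained in $S$ is a $3$-edge path in $G_S$ opening in the increasing direction, so the number of such edges is at most the total number of copies of $P_4$ (the path on four vertices) in $G_S$.

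It therefore suffices to show that any graph with $m$ edges contains at most $2m^2$ copies of $P_4$. Each $P_4$ has a unique middle edge $uv$ and is determined by one further neighbor of $u$ and one further neighbor of $v$, so the $P_4$-count is at most $\sum_{uv \in E}(\deg(u)-1)(\deg(v)-1)\le\sum_{uv \in E}\deg(u)\deg(v)$. Summing $\deg(u)\deg(v)$ over ordered incident pairs $(u,v)$ gives $\sum_u \deg(u)\sum_{v\sim u}\deg(v) \le \sum_u\deg(u)\cdot 2m = 4m^2$, which is twice the unordered sum, so $\sum_{uv\in E}\deg(u)\deg(v)\le 2m^2$. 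Hence $H$ is $2$-sparse and the conjecture fails with $c=2$.

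There is no serious obstacle in this argument: Section \ref{secconstruction0} already supplies both the construction and the $\alpha(H)\le 2\sqrt N$ bound, the identification of edges of $H$ with (oriented) copies of $P_4$ in $K_{n,n}$ is immediate from the definition, and the remaining $P_4$-counting step is a routine double counting inequality. The only point requiring mild care is keeping track of the orientation constraint when passing from ``edges of $H$ inside $S$'' to ``copies of $P_4$ in $G_S$'', but since dropping it only increases the count it costs nothing in the final upper bound.
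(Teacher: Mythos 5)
Your strategy is the same as the paper's: show that the hypergraph $H$ from Section~\ref{secconstruction0} is $c$-sparse for a fixed constant, and combine this with the bound $\alpha(H)\le 2\sqrt N$ already proved there, which kills the conjecture since it is quantified over every positive $c$. The paper asserts $1$-sparseness with a bare ``observe''; the sharp justification (which is also what drives the analogous calculation in Section~\ref{hdk}) is that each edge $\{ab,ac,db\}$ of $H$ is the unique edge through its disjoint pair $\{ac,db\}$ --- given two disjoint edges of $K_{n,n}$, the orientation inequalities $c>b$ and $d>a$ admit at most one of the two candidate middle edges --- so any $S$ spans at most $\binom{|S|}{2}<\tfrac12|S|^2$ edges. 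You instead count $3$-edge paths in $G_S$ through the middle edge, bounding the count by $\sum_{uv\in E(G_S)}\deg u\cdot\deg v\le 2m^2$ with $m=|S|$, giving $2$-sparseness. That double-count is correct and the slightly worse constant is immaterial, so your verification is complete; the pair-of-disjoint-vertices argument just yields the tighter constant the paper quotes.
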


\noindent
Recently, Kostochka, the second author, and Verstra\"ete~\cite{kmvdecaen} 
posed a stronger version of de Caen's conjecture:
for every positive $c$, there is a function $\omega(N) \rightarrow \infty$ such that
every $c$-sparse $3$-uniform hypergraph $H$ with $N$ vertices and average degree $d$
satisfies $\alpha(H) \geq \omega(N) \frac{N}{d^{1/2}}$.

Observe that the construction in Section \ref{secconstruction0} is $1$-sparse, 
so $S_2$ immediately provides a counterexample to Conjecture \ref{decaenconj} and
the conjecture of \cite{kmvdecaen}.
However, for $k \geq 3$, $H_k$ is not $c$-sparse for any constant $c$,
so one may ask whether or not for $k \geq 3$ and every positive $c$ there is a function
$\omega(N) \rightarrow \infty$ such that every $c$-sparse $(k+1)$-uniform 
hypergraph $H$ with $N$ vertices satisfies $\alpha(H) \geq \omega(N)N^{1/k}.$
The next section provides a counterexample to this generalization of de Caen's conjecture.

\subsubsection{Construction from special $k$-clusters}\label{hdk}
A \emph{$k$-cluster}, introduced in \cite{clusterm}, is a collection of $k+1$ sets with 
empty intersection whose union has size at most $2k$.
The family of \emph{special $k$-clusters} $\mathcal{D}_k$ is the $k$-uniform hypergraph 
family that is defined inductively as follows:
$\mathcal{D}_2 = \{D_2\}$, where  $D_2$ is the path with three edges.
For $k \geq 3$, $\mathcal{D}_k$ is the family of $k$-uniform hypergraphs which
can be constructed as follows: begin with any $D_{k-1} \in \mathcal{D}_{k-1}$, 
which is assumed inductively to have $2(k-1)$ vertices and two disjoint edges $a$ and $b$.
Then $D_k$ is a member of $\mathcal{D}_k$ if it can be formed 
by adding two new vertices $x,y$ to $D_{k-1}$, enlarging all edges of $D_{k-1}$ 
by including $x$, and enlarging $a$ by including $y$. 
Thus $D_k$ has $2k$ vertices and $k+1$ edges, two of which are disjoint. 
We will use $D_k$ to denote an arbitrarily chosen member of $\mathcal{D}_k$.

Following the construction from Section \ref{secconstructionsimplex},
define the $(k+1)$-uniform hypergraph $J_k$ with vertex set $X_1\times \cdots \times X_k$
and edge set
\[
J_k = \{A \subset X_1\times \cdots \times X_k: A \cong D_k \}.
\]

\begin{lemma}\label{kcluster}
Fix $k \ge 2$ and $D_k \in \mathcal{D}_k$. Then $z(n, D_k) \le  kn^{k-1}$.
\end{lemma}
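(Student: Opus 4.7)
The plan is to mirror the induction on $k$ used in Lemma \ref{positivesimplex}, but without the positivity constraint; since we do not need an increasing extension this time, we can economize by roughly a factor of two. The base case $k=2$ states that any bipartite graph on parts $X_1,X_2$ of size $n$ with more than $2n$ edges contains the $3$-edge path $D_2$. I would argue that such a graph has more edges than vertices and so contains a cycle; being bipartite, the cycle has length at least $4$ and therefore three consecutive edges of it form the required $D_2$.

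For the inductive step, suppose $H \subset X_1 \times \cdots \times X_k$ satisfies $|H| > kn^{k-1}$. For each $v \in X_1$ I would consider the link $L_v \subset X_2 \times \cdots \times X_k$ and split $L_v = A_v \sqcup B_v$, where $T \in A_v$ iff $T$ is extended by exactly one element of $X_1$ (necessarily $v$) and $T \in B_v$ iff $d_H(T) \ge 2$. Each $(k-1)$-tuple of $X_2 \times \cdots \times X_k$ contributes to at most one $A_v$, so $\sum_v |A_v| \le n^{k-1}$ and hence
\[
\sum_{v \in X_1} |B_v| \;\ge\; |H| - n^{k-1} \;>\; (k-1)n^{k-1}.
\]
Pigeonhole yields some $x \in X_1$ with $|B_x| > (k-1)n^{k-2}$, so the induction hypothesis applied to the $(k-1)$-partite $(k-1)$-uniform hypergraph $B_x$ produces a copy of some $D_{k-1} \in \mathcal{D}_{k-1}$ with its two distinguished disjoint edges $a_0, b_0$.

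To finish, I would enlarge every edge of this $D_{k-1}$ by the vertex $x$, producing $k$ edges of $H$ (since $B_x \subset L_x$), and then use that $a_0 \in B_x$ has $d_H(a_0) \ge 2$ to pick $y \in X_1 \setminus \{x\}$ with $a_0 \cup \{y\} \in H$; the resulting $k+1$ edges form a member of $\mathcal{D}_k$ by the recursive definition, with disjoint edges $b_0 \cup \{x\}$ and $a_0 \cup \{y\}$. The only real obstacle I anticipate is the bookkeeping needed to align this construction with the recursive definition of $\mathcal{D}_k$ — verifying that adjoining the two new vertices $x, y$, enlarging every $D_{k-1}$-edge by $x$, and appending the single new edge $a_0 \cup \{y\}$ matches the definition exactly. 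No deeper combinatorial difficulty should arise, precisely because (in contrast to Lemma \ref{positivesimplex}) there is no orientation to preserve.
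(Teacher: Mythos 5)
Your proof follows the paper's argument essentially line for line: same base case (a bipartite graph with more than $2n$ edges has a cycle of length $\ge 4$, hence a $3$-edge path), same $A_v/B_v$ split with $\sum_v|A_v| \le n^{k-1}$, same pigeonhole extraction of $x$ with $|B_x|>(k-1)n^{k-2}$, and the same lift by adjoining $x$ to every edge and $y$ to one of the disjoint edges. The one spot to tighten is the phrase ``produces a copy of \emph{some} $D_{k-1}\in\mathcal{D}_{k-1}$'': since the lemma is stated for a \emph{fixed} $D_k$, you should invoke the inductive hypothesis with the \emph{specific} $D_{k-1}\in\mathcal{D}_{k-1}$ that gives rise to $D_k$ in the recursive definition (this is available, since the inductive bound holds for every member of $\mathcal{D}_{k-1}$); otherwise your argument only shows that $H$ contains some member of $\mathcal{D}_k$, not necessarily the prescribed one. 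The paper makes this choice explicit, and with it your lift matches the recursive definition exactly.
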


\begin{proof}
We proceed by induction on $k$.
For the base case $k=2$, observe that $D_2$ is the path with 3 edges. 
If $H$ is a bipartite graph with more than
$2n$ edges, then $H$ contains a cycle with at least four edges, which contains a copy of $D_2$.
For the induction step, suppose we are given $k$-partite $H$ with  $|H|>kn^{k-1}$ with 
parts $X_1, \ldots, X_k$ each of size $n$. 
For each $v \in X_1$, define $L_v$, $A_v$, and $B_v$ as in the proof of Lemma \ref{positivesimplex}.
Then
\begin{align}\label{bvbound}
kn^{k-1} < |H| 
=\sum_{v \in X_1}|L_v|
= \sum_{v \in X_1}|A_v|+\sum_{v \in X_1}|B_v|
&\le n^{k-1}+\sum_{v \in X_1}|B_v|.
\end{align}
Consequently, there exists $x \in X_1$ with  $|B_x|>(k-1)n^{k-2}$. Let $D_{k-1}$ be the member of 
$\mathcal{D}_{k-1}$ that gives rise to $D_k$ in the inductive construction of $D_k$. 
Apply induction to $B_x$ to obtain  a copy of $D_{k-1}$ in $B_x$.
To form $D_k$, begin by  enlarging each edge of $D_{k-1}$ with $x$. 
Add another edge by enlarging one of the two disjoint edges $a,b$ of $D_{k-1}$ (say $a$) 
by some other vertex $y \in X_1$. Note that $y$ exists since $a \in B_x$.
We have thus obtained a copy of $ D_k$, where $a\cup \{y\}$ and $b\cup \{x\}$ are the disjoint edges. 
\end{proof}

By Lemma \ref{kcluster}, $\alpha(J_k) \leq kn^{k-1}$,
so it suffices to show that $J_k$ is $2^{2k^2-2k-1}$-sparse.
Let $S \subset V(J_k)$.
The vertex set of a copy of $D_k$ is determined by the
two disjoint edges in $D_k$. There are at most $\binom{2k}{k}^{k-1}$
possibilities for the remaining $k-1$ edges.
Therefore we may associate every pair of vertices in $S$ 
to at most $\binom{2k}{k}^{k-1}$ edges in the subgraph induced by $S$.
Since every edge corresponds to at least one pair of vertices,
the number of edges in $S$ is at most
\[
\binom{2k}{k}^{k-1}\binom{|S|}{2} < 2^{2k^2 - 2k - 1}|S|^2.
\]
This disproves the generalization of de Caen's conjecture to $k$-uniform hypergraphs.

\section{Concluding remarks}
\begin{itemize}
\item 
$H_k$ is a counterexample to Conjecture \ref{fmconj} with $N$
vertices and maximum degree $\Theta(N)$.
Sparser counterexamples with $f N$ vertices and maximum degree $N$ 
can be constructed by taking the disjoint union of $f$ copies of $H_k$.

\item 
Benny Sudakov suggested the following generalization of $H_2$ to $(k+1)$-uniform
hypergraphs, which provides a denser counterexample to Conjecture \ref{fmconj} for $k \geq 3$. 
Let $G$ be the $(k+1)$-uniform hypergraph with vertex set $[n]\times[n]$ and
edge set
\[
\{(x_1, y_1), (x_1, y_2), (x_2, y_2), \dots, (x_k, y_2): 
 y_2 < y_1, x_i < x_{i+1} \text{ for } i \in [k-1] \}.
\]
In other words, each edge corresponds to an $L$ with $k$ points on its base.
It is not hard to see that $G$ has maximum degree $\Theta(n^{k})$,
independence number $\Theta(n)$, and contains no copy of $T_{k+1}$.

\item
For $1<r<k+1$, say that a $(k+1)$-uniform hypergraph is $(c,r)$-sparse if every vertex subset 
$S$ spans at most $c|S|^r$ edges.  
A partial Steiner $(k+1,k)$-system is a $(k+1)$-uniform hypergraph with every $k$ vertices 
in at most one edge. 
Such a system has average degree at most $n^{k-1}$ and, by \cite{kmvdecaen}, has independence number at 
least $c'(n \log n)^{1/k}$ for some positive $c'$. 
This result cannot be extended to the larger class of $(c,k)$-sparse $(k+1)$-uniform hypergraphs,
as shown by the following $(c,3)$-sparse 4-uniform hypergraphs with independence number $O(n^{1/3})$.

Let $F$ be the set of 3-partite 3-uniform hypergraphs with four edges such that one of the edges is 
contained in the union of the other three.  
Then it is an easy exercise to show (by induction on $n$ for example) that $z(n, F)= O(n)$,
so our general construction provides 
a 4-uniform, $(c,3)$-sparse  hypergraph $H(F)$ on $n^3$ vertices 
with $\alpha(H(F)) = O(n)$ (for $(c,3)$-sparse, use the argument in Section \ref{hdk}).

We remark that, in addition, $H(F)$ contains no $K_{163}^{(4)}$ for the vertex set of a
copy of $K_{163}^{(4)}$ would correspond to a set of  $163 = 1+3!(4-1)^3$ 3-uniform edges, 
and by the Erd\H{o}s-Rado sunflower lemma, these edges would contain a sunflower $C$ of size 4. 
But the 4 vertices in $H(F)$ corresponding to the edges of $C$  cannot form an edge 
in $H(F)$ since not one of them is contained in the union of the other three.  

\item
Define the $3$-uniform hypergraphs 
$F_5 = \{ abc, abd, cde \}$ and $C_3 = \{abc, cde, efa\}$.
The authors~\cite{cmcoloring3} recently answered Question \ref{aeksconj} positively
if, in addition to $K^{-(3)}_4$, $F_5$ and $C_3$ are also forbidden.
It would be interesting to answer Question \ref{aeksconj} if only $K^{-(3)}_4$ and $C_3$ are forbidden.

\end{itemize}

\section{Acknowledgments}
We would like to thank the referees for carefully reading our manuscript and 
providing thoughtful feedback.

\bibliographystyle{amsplain1}
\bibliography{bib}

\end{document}